\documentclass{amsart}
\usepackage{graphicx}
\usepackage{amssymb}
\usepackage{epstopdf}
\usepackage{epsfig}
\usepackage{subfigure}
\DeclareGraphicsRule{.tif}{png}{.png}{`convert #1 `dirname #1`/`basename #1 .tif`.png}
\newtheorem{theorem}{Theorem}[section]

\newtheorem{proposition}{Proposition}[section]

\theoremstyle{remark}
\newtheorem{remark}{Remark}[section]
\theoremstyle{remark}

\newtheorem{definition}{Definition}[section]
\newtheorem{example}{Example}[section]
\numberwithin{equation}{section}

\theoremstyle{definition}
\theoremstyle{remark}
\numberwithin{equation}{section}

\parindent 0cm
\parskip 0.2cm
\begin{document}

\title[On Spherical Slant Helices in Euclidean 3-space]{%
On Spherical Slant Helices in Euclidean 3-space}
\author{\c{C}etin Camc{\i}, Levent Kula, Mesut Alt{\i}nok}
\address[\c{C}etin Camc{\i}]{\c{C}anakkale Onsekiz Mart University, Faculty of Sciences and Arts\\
Department of Mathematics, Canakkale, Turkey}
\email{ccamci@comu.edu.tr}
\address[Levent Kula]{Ahi Evran University, Faculty of Sciences and Arts\\
Department of Mathematics, Kirsehir, Turkey}
\email{lkula@ahievran.edu.tr}
\address[Mesut Alt{\i}nok]{Ahi Evran University, Faculty of Sciences and Arts\\
Department of Mathematics, Kirsehir, Turkey}
\email{altnokmesut@gmail.com}
\subjclass[2000]{53A04, 14H50.}

\begin{abstract}
In this paper we consider the spherical slant helices in $\mathbb{R}^{3}$. Moreover, we show how could be obtained to a spherical slant helix and we give some spherical slant helix examples in Euclidean 3-space.
~\\

\noindent\textbf{Key Words:} Slant helix, spherical curve, geodesic curvature.
\end{abstract}

\maketitle

\section{Introduction}

In \cite{izu}, A slant helix in Euclidean space $\mathbb{R}^{3}$
was defined by the property that the principal normal makes a constant angle
with a fixed direction. Moreover, Izumiya and Takeuchi showed that $\alpha $
is a slant helix in $\mathbb{R}^{3}$ if and only if the geodesic curvature
of the principal normal of a space curve $\alpha $ is a constant function.

In \cite{kula2}, Kula and Yayli have studied spherical images of
tangent indicatrix and binormal indicatrix of a slant helix and they showed
that the spherical images are spherical helix.

In \cite{kula1}, Kula, Ekmekci, Yayli and Ilarslan have studied
the relationship between the plane curves and slant helices in
$\mathbb{R}^{3}$. They obtained that the differential equations which are
characterizations of a slant helix.

In this paper we consider the spherical slant helices in $\mathbb{R}^{3}$. We also present the parametric slant helices, their curvatures and torsions. Moreover, we give some slant helix examples in Euclidean 3-space.

\section{Preliminaries}

We now recall some basic concept on classical geometry of space curves and the definition of slant helix in $\mathbb{R}^3$. A curve $\alpha:I \subset \mathbb{R} \to \mathbb{R}^3$ is a space curve. $T(s)=\frac{\alpha'(s)}{\|\alpha'(s)\|}$ is a unit tangent vector of $\alpha$ at $s$. If $\kappa(s)\neq0$, then the unit principal binormal vector $B(s)$ of the curve $\alpha$ at $s$ is given by $\frac{\alpha'(s) \wedge \alpha''(s)}{\|\alpha'(s) \wedge \alpha''(s)\|}$. The unit vector $N(s)=B(s) \wedge T(s)$ is called the unit normal vector of $\alpha$ at $s$. For the derivatives of the Frenet frame the Serret-Frenet formula hold:
\begin{eqnarray}
T'(s)&=&\nu \kappa(s)N(s), \nonumber\\
N'(s)&=&\nu(-\kappa(s)T(s)+\tau(s)B(s)),\\
B'(s)&=&-\nu \tau(s)N(s) \nonumber,
\label{1.1}
\end{eqnarray}
where $\kappa$ is the curvature of the curve $\alpha$ at $s$, $\tau$ is the torsion of the curve $\alpha$ at $s$ and $\nu = \|\alpha'\|$. For a general parameter $s$ of a space curve $\alpha$, we can calculate $\kappa$ the curvature and the torsion $\tau$ as follows:
\begin{equation*}
\kappa(s)=\frac{\|\alpha'(s) \wedge \alpha''(s)\|}{\|\alpha'(s)\|^3},~~~~\tau(s)=\frac{\det{\alpha'(s),\alpha''(s),\alpha'''(s)}}{\|\alpha'(s) \wedge \alpha''(s)\|^2}.
\end{equation*}
\begin{definition}
A curve $\alpha$ with $\kappa(s)\neq0$ is called a slant helix if the principal normal vector line of $\alpha$ make a constant angle with a fixed direction \cite{izu}.
\end{definition}
\begin{theorem}
$\alpha$ is a slant helix if and only if the geodesic curvature of the spherical image of the principal normal indicatrix (N) of $\alpha$
\begin{equation}
\sigma(s)=\left(\frac{\kappa^2}{\nu (\kappa^2+\tau^2)^{\frac{3}{2}}}\left(\frac{\tau}{\kappa}\right)'\right)(s)
\label{1.2}
\end{equation}
is a constant function \cite{izu}.
\end{theorem}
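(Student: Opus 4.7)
The plan is to view the principal normal indicatrix as a curve $\gamma(s)=N(s)$ on the unit sphere $S^{2}\subset\mathbb{R}^{3}$ and to compute its geodesic curvature directly. Since $\gamma$ itself plays the role of the outward unit normal to $S^{2}$ along $\gamma$, the intrinsic unit normal of $\gamma$ inside $S^{2}$ is $(\gamma\wedge\gamma')/\|\gamma'\|$, and the standard formula
\begin{equation*}
\sigma=\frac{\langle\gamma'',\,\gamma\wedge\gamma'\rangle}{\|\gamma'\|^{3}}
\end{equation*}
applies for an arbitrary parameter $s$.

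\textbf{Computing (1.2).} By the Serret--Frenet formulas, $\gamma'=N'=\nu(-\kappa T+\tau B)$, so $\|\gamma'\|=\nu\sqrt{\kappa^{2}+\tau^{2}}$, and using $N\wedge T=-B$, $N\wedge B=T$ one obtains $\gamma\wedge\gamma'=\nu(\tau T+\kappa B)$. Differentiating $\gamma'$ once more and substituting $T'=\nu\kappa N$, $B'=-\nu\tau N$ produces an expansion of $\gamma''$ with $T$-coefficient $-(\nu'\kappa+\nu\kappa')$, $N$-coefficient $-\nu^{2}(\kappa^{2}+\tau^{2})$, and $B$-coefficient $\nu'\tau+\nu\tau'$. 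When paired with $\gamma\wedge\gamma'$ the $N$-component is annihilated, and the $\nu'$-cross terms in the $T$ and $B$ slots cancel, leaving
\begin{equation*}
\langle\gamma'',\,\gamma\wedge\gamma'\rangle=\nu^{2}(\tau'\kappa-\kappa'\tau)=\nu^{2}\kappa^{2}(\tau/\kappa)'.
\end{equation*}
Dividing by $\|\gamma'\|^{3}=\nu^{3}(\kappa^{2}+\tau^{2})^{3/2}$ reproduces exactly formula (1.2).

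\textbf{The equivalence.} A curve on $S^{2}$ has constant geodesic curvature if and only if it lies on a small circle, i.e., on $S^{2}\cap\{x:\langle x,\mathbf{u}\rangle=\cos\theta\}$ for some fixed unit vector $\mathbf{u}$ and some constant angle $\theta$. Through the identification $\gamma=N$ this is precisely the slant helix condition $\langle N(s),\mathbf{u}\rangle=\cos\theta$. For a self-contained verification one decomposes $\mathbf{u}=aT+bN+cB$ and imposes $\mathbf{u}'=0$: the Serret--Frenet formulas then force $b\equiv\cos\theta$ constant and $a\kappa=c\tau$, and combining these with $a^{2}+c^{2}=\sin^{2}\theta$ yields the explicit identity $\sigma=\cot\theta$. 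Conversely, given $\sigma$ constant the same relations can be integrated for $a,b,c$ and used to construct the fixed axis $\mathbf{u}$.

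\textbf{Main obstacle.} The geometric picture is clear; the real work lies in the middle step, where a non-unit-speed parameter must be carried through the Frenet apparatus and the $\nu'$-terms seen to cancel so that the numerator collapses to $\kappa^{2}(\tau/\kappa)'$. Once (1.2) is established, the equivalence with the slant helix condition follows at once from the classical characterization of small circles on $S^{2}$ as the spherical curves of constant geodesic curvature.
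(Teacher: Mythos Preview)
Your argument is correct: the formula for the geodesic curvature of a spherical curve, the Frenet computation leading to $\langle\gamma'',\gamma\wedge\gamma'\rangle=\nu^{2}\kappa^{2}(\tau/\kappa)'$, and the identification of ``$\sigma$ constant'' with ``$N$ lies on a small circle'' are all sound, and the explicit verification $\sigma=\cot\theta$ via $\mathbf{u}=aT+bN+cB$ is exactly the standard route.

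There is, however, nothing to compare against: the paper does not prove this theorem. It appears in the Preliminaries section as a quoted result from Izumiya and Takeuchi \cite{izu}, with no accompanying argument. So your write-up supplies a proof the paper simply cites. The only place one might ask for a bit more detail is the converse direction in your final paragraph (``the same relations can be integrated''): spelling out that, given a constant $\sigma$, one \emph{defines} $\mathbf{u}$ by the coefficients $a=\tau\sin\theta/\sqrt{\kappa^{2}+\tau^{2}}$, $b=\cos\theta$, $c=\kappa\sin\theta/\sqrt{\kappa^{2}+\tau^{2}}$ with $\cot\theta=\sigma$, and then checks $\mathbf{u}'=0$ directly, would make the argument fully symmetric. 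But this is cosmetic; the proof is complete as it stands.
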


\begin{definition}
Specially, if $\kappa(s)=1$, a slant helix is called a Salkowski curve and if $\tau(s)=1$, a slant helix is called an anti-Salkowski curve \cite{monterde}.
\end{definition}

\begin{theorem}
\label{lemma2}
Let $\alpha$ be a space curve in $\mathbb{R}^3$. The following statements are equivalent:
\begin{enumerate}
  \item $\alpha$ is a spherical, i.e., it is contained in a sphere of radius $r$
  \item $ \left(\frac{1}{\kappa}\right)^2 + \left(\frac{1}{\nu \tau} \left(\frac{1}{\kappa}\right)' \right)^2=r^2.$
  \item $\frac{1}{\nu}\left(\frac{1}{\nu \tau} \left(\frac{1}{\kappa}\right)' \right)' + \frac{\tau}{\kappa}=0.$
  \item $\frac{1}{\kappa} = A \cos\left(\int \nu \tau ds \right) + B \sin\left(\int \nu \tau ds \right)$\\~\\
where, $A$, $B$ are constant and $\sqrt{A^2+B^2}=r$ \cite{wong}.
\end{enumerate}
\end{theorem}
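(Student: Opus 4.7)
The plan is to close the chain $(1)\Rightarrow(2)\Rightarrow(3)\Rightarrow(4)\Rightarrow(1)$; each step is essentially reversible, but the cyclic form avoids duplication. For $(1)\Rightarrow(2)$, assume $\alpha$ lies on a sphere with center $c$ and radius $r$, and expand $\alpha-c=aT+bN+dB$ in the Frenet frame. Successively differentiating the identity $\|\alpha-c\|^{2}=r^{2}$ produces three scalar constraints: the first gives $\alpha'\cdot(\alpha-c)=0$, forcing $a=0$; the second, using $\alpha''=\nu'T+\nu^{2}\kappa N$, yields $b=-1/\kappa$; the third, using $N'=\nu(-\kappa T+\tau B)$, yields $d=-\frac{1}{\nu\tau}(1/\kappa)'$. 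Substituting these into $b^{2}+d^{2}=r^{2}$ is precisely (2).

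Set $\rho=1/\kappa$. For $(2)\Rightarrow(3)$, differentiate the left-hand side of (2) with respect to $s$ and divide by $2\rho'$; the result is equivalent to (3) wherever $\rho'\neq 0$, and the identity extends to the remaining isolated points by continuity. For $(3)\Rightarrow(4)$, introduce the phase variable $\theta(s):=\int\nu\tau\,ds$, so that $d\theta/ds=\nu\tau$ and $\rho'/(\nu\tau)=d\rho/d\theta$. A direct computation rewrites (3) as the harmonic-oscillator equation $d^{2}\rho/d\theta^{2}+\rho=0$, whose general solution is $\rho=A\cos\theta+B\sin\theta$, which is (4).

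Finally, for $(4)\Rightarrow(1)$, define the candidate center $c(s):=\alpha(s)+\rho N+\frac{\rho'}{\nu\tau}B$. A short Frenet-Serret calculation gives $c'=\nu\bigl[\tau\rho+\tfrac{1}{\nu}(\rho'/(\nu\tau))'\bigr]B$, which vanishes because (4) implies (3); hence $c$ is a constant vector. Differentiating (4) gives $\rho'/(\nu\tau)=-A\sin\theta+B\cos\theta$, whence $\|\alpha-c\|^{2}=\rho^{2}+(\rho'/(\nu\tau))^{2}=A^{2}+B^{2}$, placing $\alpha$ on the sphere of radius $\sqrt{A^{2}+B^{2}}$ about $c$ and giving $r^{2}=A^{2}+B^{2}$. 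The main obstacle is the bookkeeping in the non-unit-speed Frenet-Serret formulas: tracking the $\nu$ and $\nu'$ factors in derivatives of $T,N,B$ and in $\alpha',\alpha''$, and keeping signs straight in the expansion of $\alpha-c$ versus $c-\alpha$. Once those are under control, each implication reduces to a short algebraic manipulation or a linear second-order ODE in the natural phase variable $\theta$.
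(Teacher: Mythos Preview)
The paper does not supply its own proof of this theorem: it is quoted from \cite{wong} as a known characterization of spherical curves, so there is no in-paper argument to compare against. Your cyclic proof $(1)\Rightarrow(2)\Rightarrow(3)\Rightarrow(4)\Rightarrow(1)$ is correct and is precisely the classical route one finds in Wong and in standard texts: expand $\alpha-c$ in the Frenet frame and differentiate $\|\alpha-c\|^2=r^2$ three times to get (2); differentiate (2) and cancel the common factor $\rho'$ to get (3); change the independent variable to $\theta=\int\nu\tau\,ds$ so that (3) becomes $\rho_{\theta\theta}+\rho=0$, giving (4); and reconstruct the center $c=\alpha+\rho N+\dfrac{\rho'}{\nu\tau}B$ to recover (1). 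Your bookkeeping with the speed factor $\nu$ is accurate; the only implicit assumptions are the usual nondegeneracy ones ($\kappa\neq 0$, $\tau\neq 0$, and $\rho'$ not identically zero on an interval when passing from (2) to (3)), which are standard for this statement.
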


\section{Slant helix and Its Projection}

In this section, we investigate curvature of slant helix and projection of slant helix in the plane.
\begin{theorem}
Let $\alpha$ be a space curve in $\mathbb{R}^3$. The following statements are equivalent:
\begin{enumerate}
  \item $\alpha$ is a slant helix.
  \item
  \begin{eqnarray*}
\kappa & = & \frac{1}{a \nu} \theta' \sin \theta\\
\tau & = & \frac{1}{a \nu} \theta' \cos \theta.
\end{eqnarray*}
  \item $\vec{U}=\frac{ \tau }{a \sqrt{\kappa^2 + \tau^2}}  ~T+  ~N + \frac{ \kappa }{a \sqrt{\kappa^2 + \tau^2}} ~B=constant$.
\end{enumerate}
Where $a\neq 0$ is a constant and $\theta$ is a function of $s$ \cite{toni}.
\end{theorem}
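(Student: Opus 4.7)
The plan is to pivot on the Izumiya--Takeuchi criterion stated in equation~\eqref{1.2}: $\alpha$ is a slant helix iff the geodesic curvature $\sigma=\kappa^{2}(\tau/\kappa)'/(\nu(\kappa^{2}+\tau^{2})^{3/2})$ of the principal normal indicatrix is a constant function. I will show that each of conditions (2) and (3) is equivalent to $\sigma$ being constant, which closes the cycle of equivalences with (1).

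For (1) $\Leftrightarrow$ (3), I would differentiate $\vec{U}=U_{1}T+N+U_{3}B$, where $U_{1}=\tau/(a\sqrt{\kappa^{2}+\tau^{2}})$ and $U_{3}=\kappa/(a\sqrt{\kappa^{2}+\tau^{2}})$, using the Serret--Frenet formulas. Grouping the result in the Frenet basis gives
\[
\vec{U}'=(U_{1}'-\nu\kappa)\,T+\nu(U_{1}\kappa-U_{3}\tau)\,N+(U_{3}'+\nu\tau)\,B.
\]
The $N$-coefficient vanishes identically because the prescribed ratio $U_{1}:U_{3}=\tau:\kappa$ is built into the definition of $\vec{U}$. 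A quotient-rule computation yields $U_{1}'=\kappa^{3}(\tau/\kappa)'/(a(\kappa^{2}+\tau^{2})^{3/2})$, and the vanishing of the $T$-coefficient $U_{1}'=\nu\kappa$ rearranges exactly to $\sigma=a$; the $B$-coefficient produces the same scalar equation. Hence $\vec{U}$ is constant iff $\sigma\equiv a$, which by~\eqref{1.2} is precisely condition (1).

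For (1) $\Leftrightarrow$ (2), I would introduce the angle function $\theta(s)$ determined locally by $\sin\theta=\kappa/\sqrt{\kappa^{2}+\tau^{2}}$ and $\cos\theta=\tau/\sqrt{\kappa^{2}+\tau^{2}}$, which is smooth wherever $\kappa\neq 0$. Then $\tau/\kappa=\cot\theta$, so $(\tau/\kappa)'=-\theta'/\sin^{2}\theta$, and substitution into~\eqref{1.2} simplifies (using $\kappa^{2}/\sin^{2}\theta=\kappa^{2}+\tau^{2}$) to $\sigma=-\theta'/(\nu\sqrt{\kappa^{2}+\tau^{2}})$. Constancy of $\sigma$ is therefore equivalent to $\sqrt{\kappa^{2}+\tau^{2}}=\theta'/(a\nu)$ for a suitable constant $a$, and multiplying this identity by $\sin\theta$ and $\cos\theta$ recovers $\kappa=\theta'\sin\theta/(a\nu)$ and $\tau=\theta'\cos\theta/(a\nu)$, which is~(2). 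The converse follows by reading the same chain of equalities backwards.

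The only real obstacle I anticipate is sign and branch bookkeeping: arranging that the constant $a$ appearing in (2) and in (3) corresponds (up to sign or reciprocal) to the Izumiya constant $\sigma$, and choosing a consistent smooth branch of $\theta$ on the open set $\{\kappa\neq 0\}$. Conceptually every step is routine once~\eqref{1.2} is in hand, since (2) and (3) are both compact reformulations of the single scalar identity $\sigma=\mathrm{const}$.
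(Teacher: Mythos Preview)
The paper does not actually prove this theorem: the statement ends with the citation \cite{toni} and is immediately followed by the next theorem, so there is no in-paper argument to compare against. Your proposal is therefore a genuine proof where the paper offers none.

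That said, your argument is correct. The differentiation of $\vec{U}$ and the identification of both nontrivial Frenet components with the single scalar condition $\sigma=a$ is exactly right, and your angle substitution $\sin\theta=\kappa/\sqrt{\kappa^{2}+\tau^{2}}$, $\cos\theta=\tau/\sqrt{\kappa^{2}+\tau^{2}}$ reduces~\eqref{1.2} cleanly to $\sigma=-\theta'/(\nu\sqrt{\kappa^{2}+\tau^{2}})$, from which (2) follows. The sign discrepancy you flag is real but harmless: the constant produced in (2) is $-\sigma$ while the one in (3) is $+\sigma$, and the paper itself later computes $\sigma=-a$ in the proof of Theorem~4.1, so the convention there matches your (2) computation. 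Since the theorem only asserts the existence of \emph{some} nonzero constant $a$, this bookkeeping does not affect the equivalences.
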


\begin{theorem}
Let $\alpha$ be a slant helix with Frenet frame $\{T,N,B\}$, curvature $\kappa$, torsion $\tau$ and $\vec{a}=\cos\theta_1 T + \cos\theta_2 N + \cos\theta_3 B$ be axis of slant helix.
\begin{equation}
\label{11}
\alpha_{\pi}(s)=\alpha(s)- \langle \alpha(s), \vec{a} \rangle \vec{a}
\end{equation}
is a plane curve and its curvature $\kappa_{\pi}$ is
\begin{eqnarray}
\label{18}
\kappa_{\pi}=\frac{(1 +a^2) \sin \theta}{(a^2+\sin^2 \theta)^{\frac{3}{2}}} \kappa.
\end{eqnarray}
\end{theorem}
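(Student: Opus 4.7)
The plan is to apply the standard formula $\kappa_\pi = \|\alpha_\pi'\wedge\alpha_\pi''\|/\|\alpha_\pi'\|^3$, exploiting that $\alpha_\pi = P\circ\alpha$, where $P(v) := v - \langle v,\vec a\rangle\vec a$ is orthogonal projection onto $\vec a^\perp$. First I would identify the direction cosines of $\vec a$ explicitly. From Theorem 3.1, $\vec U$ is constant with $\|\vec U\|^2 = 1 + 1/a^2$, so $\vec a = (a/\sqrt{1+a^2})\vec U$. Combining this with $\kappa = \theta'\sin\theta/(a\nu)$ and $\tau = \theta'\cos\theta/(a\nu)$, the ratios $\kappa/\sqrt{\kappa^2+\tau^2} = \sin\theta$ and $\tau/\sqrt{\kappa^2+\tau^2} = \cos\theta$ give
\begin{equation*}
\cos\theta_1 = \frac{\cos\theta}{\sqrt{1+a^2}}, \qquad \cos\theta_2 = \frac{a}{\sqrt{1+a^2}}, \qquad \cos\theta_3 = \frac{\sin\theta}{\sqrt{1+a^2}}.
\end{equation*}

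Planarity of $\alpha_\pi$ is immediate from $\langle\alpha_\pi,\vec a\rangle\equiv 0$. Since $\vec a$ is constant, $P$ commutes with differentiation, so
\begin{equation*}
\alpha_\pi' = \nu\,P(T), \qquad \alpha_\pi'' = \nu'\,P(T) + \nu^2\kappa\,P(N),
\end{equation*}
and therefore $\alpha_\pi'\wedge\alpha_\pi'' = \nu^3\kappa\,P(T)\wedge P(N)$.

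The remaining work is to evaluate the two norms. The key observation is that for any two vectors $u,v$ one has $P(u)\wedge P(v) = \det(u,v,\vec a)\,\vec a$: the projections lie in $\vec a^\perp$, forcing the cross product to be parallel to $\vec a$, and the scalar triple product is invariant under the subtractions defining $P$ since they are multiples of the third column. Applied to $(u,v)=(T,N)$, this gives $P(T)\wedge P(N) = \langle B,\vec a\rangle\vec a = \cos\theta_3\,\vec a$, whence $\|\alpha_\pi'\wedge\alpha_\pi''\| = \nu^3\kappa\sin\theta/\sqrt{1+a^2}$. Likewise $\|\alpha_\pi'\|^2 = \nu^2(1-\cos^2\theta_1) = \nu^2(a^2+\sin^2\theta)/(1+a^2)$, and substituting into $\kappa_\pi = \|\alpha_\pi'\wedge\alpha_\pi''\|/\|\alpha_\pi'\|^3$ produces the stated formula. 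The main difficulty is just algebraic bookkeeping; the only conceptual shortcut is the projected cross-product identity, which avoids a lengthy direct expansion of $(\alpha'-\langle\alpha',\vec a\rangle\vec a)\wedge(\alpha''-\langle\alpha'',\vec a\rangle\vec a)$.
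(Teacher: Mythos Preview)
Your proposal is correct and follows essentially the same route as the paper: differentiate the projection $\alpha_\pi$, compute $\alpha_\pi'\wedge\alpha_\pi''$, show it equals $\nu^3\kappa\cos\theta_3\,\vec a$, and combine with $\|\alpha_\pi'\|=\nu\sin\theta_1$ after identifying the direction cosines via Theorem~3.1. Your projected cross-product identity $P(u)\wedge P(v)=\det(u,v,\vec a)\,\vec a$ is a clean packaging of exactly the expansion the paper carries out by hand; the only cosmetic discrepancy is the sign of $\cos\theta_1$ (the paper writes $-\cos\theta/\sqrt{1+a^2}$), which is immaterial since only $\sin^2\theta_1$ enters the final formula.
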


\begin{proof}
Differentiating the eq. \eqref{11}, we get
\begin{eqnarray}
\label{12}
\alpha_{\pi}'(s)=\nu(T - \cos \theta_1 \vec{a}).
\end{eqnarray}
Therefore
\begin{eqnarray*}
\label{14}
\nu_{\pi}&=& \| \alpha_{\pi}'(s) \| = \frac{ds_{\pi}}{ds} =\nu \| T - \cos \theta_1 \vec{a} \|=\nu \sin \theta_1.
\end{eqnarray*}
If we derive eq. \eqref{12} again, we obtain
\begin{eqnarray*}
\label{42}
\alpha_{\pi}''(s)=\nu' T + \nu^2  \kappa N - \nu' \cos \theta_1 \vec{a} - \nu^2 \kappa \cos \theta_2 \vec{a}.
\end{eqnarray*}
Thus
\begin{eqnarray*}
\alpha_{\pi}' \times \alpha_{\pi}''&=&\nu^3 \kappa ( B - \cos \theta_2 T \times \vec{a} - \cos \theta_1 \vec{a} \times N)\\
&=&\nu^3 \kappa \cos \theta_3 \vec{a}.
\end{eqnarray*}
It follows
\begin{eqnarray}
\label{17}
\kappa_{\pi}=\frac{\|\alpha_{\pi}' \times \alpha_{\pi}''\|}{\nu_{\pi}^3}=\frac{\cos \theta_3 }{\sin^3 \theta_1} \kappa
\end{eqnarray}
and $\tau_{\pi}=0$.
We can easily see that
\begin{eqnarray}
\cos \theta_1  = - \frac{\cos \theta}{\sqrt{1+a^2}}\nonumber, ~~~~~\cos \theta_2  =  \frac{a}{\sqrt{1+a^2}}, ~~~~~\cos \theta_3  =  \frac{\sin \theta}{\sqrt{1+a^2}} \nonumber
\end{eqnarray}
In this case,
\begin{eqnarray*}
\label{15}
\sin \theta_1 = \frac{\sqrt{a^2+\sin^2 \theta}}{\sqrt{1+a^2}},
\end{eqnarray*}
\begin{eqnarray}
\label{15}
\frac{ds_{\pi}}{ds}= \nu \frac{\sqrt{a^2+\sin^2 \theta}}{\sqrt{1+a^2}}
\end{eqnarray}
and finally,
\begin{eqnarray}
\label{25}
\kappa_{\pi}=\frac{(1 +a^2) \sin \theta}{(a^2+\sin^2 \theta)^{\frac{3}{2}}} \kappa.
\end{eqnarray}

\end{proof}

\section{Spherical slant helices}

In this section, we investigate parametric equation of spherical slant helix and we give some theorem.

The axis $\vec{a}$ of slant helix could chosen axis Oz without loss of generality. Now since the tangent and normal are orthogonal unit vectors, we may write $T_{\pi}(s_{\pi})=(\cos \phi_{\pi},\sin \phi_{\pi})$ and $N_{\pi}(s_{\pi})=(\sin \phi_{\pi},-\cos \phi_{\pi})$, $\phi_{\pi}(s_{\pi})$ being the angle between the $x$-axis and tangent. We observe that the curvature has the interpretation
\begin{equation*}
\kappa_{\pi}=\phi_{\pi}'
\end{equation*}
i.e., it is derivative of the tangent-angle $\phi_{\pi}$ with respect to the arc length $s_{\pi}$.

The function $\kappa_{\pi}(s_{\pi})$ specifying thr curvature in terms of arc length along a plane curve is called the intrinsic equation of that curve. it uniquely defines the curve. We have the explicit representation
\begin{equation}
\label{xyz}
x(s)  =  \int \cos \phi_{\pi} ds_{\pi}, ~~~~~~y(s) = \int \sin \phi_{\pi}  ds_{\pi}
\end{equation}
of the curve $\alpha(s)=(x(s),y(s),z(s))$, where
\begin{equation}
\label{xyzt}
\phi_{\pi}=\int \kappa_{\pi} ds_{\pi}.
\end{equation}
\cite{neill}.

Therefore, from eq. \eqref{xyzt} and eq. \eqref{25}
\begin{eqnarray}
\label{fi}
\phi_{\pi}& = & -\arctan \left( \frac{\sqrt{1+a^2}}{a} \tan \theta \right) + \frac{\sqrt{1+a^2}}{a} \theta.
\end{eqnarray}
and by using eq. \eqref{xyz}, eq. \eqref{15},
\begin{eqnarray*}
x(s)& = & \int \cos \phi_{\pi} ds_{\pi} \nonumber \\
&=& \int \left( \frac{a}{\sqrt{\sin^2\theta + a^2}} \cos\theta \cos \left[ \frac{\sqrt{1+a^2}}{a} \theta \right] + \frac{\sqrt{1+a^2}}{\sqrt{\sin^2\theta + a^2}} \sin\theta \sin\left[ \frac{\sqrt{1+a^2}}{a} \theta \right] \right) ds_{\pi} \nonumber \\
&=& \nu \int \left( \frac{a}{\sqrt{1+a^2}} \cos\theta \cos \left[ \frac{\sqrt{1+a^2}}{a} \theta \right] + \sin\theta \sin\left[ \frac{\sqrt{1+a^2}}{a} \theta \right] \right) ds \nonumber.
\end{eqnarray*}
Then from the fourth equation of Theorem 2.2 and the second equation of Theorem 3.1, we have
\begin{equation}
\label{ds}
ds=\frac{1}{a \nu}\left(A \sin\theta \cos\left[ \frac{\sin\theta}{a} \right] + B \sin\theta \sin\left[ \frac{\sin\theta}{a} \right]\right)d\theta.
\end{equation}
Therefore, by using eq. \eqref{ds},
\begin{eqnarray*}
\label{x(s)}
x(s) & =  &\frac{1}{\sqrt{1+a^2}}( \sqrt{1+a^2} \cos \theta \sin \left[ \frac{\sqrt{1+a^2}}{a} \theta \right] \left( B \cos \left[ \frac{\sin\theta}{a} \right] -A\sin \left[ \frac{\sin\theta}{a} \right] \right){} \nonumber\\
&& {} - \cos \left[ \frac{\sqrt{1+a^2}}{a} \theta \right] \left( \cos \left[ \frac{\sin\theta}{a} \right] (A + a B \sin \theta) + ( B-a A \sin \theta ) \sin \left[ \frac{\sin\theta}{a} \right] \right) ),\nonumber\\
\end{eqnarray*}

And similarly,
\begin{eqnarray*}
\label{y(s)}
y(s)  & =  & -\frac{1}{\sqrt{1+a^2}}( \sqrt{1+a^2} \cos \theta \cos \left[ \frac{\sqrt{1+a^2}}{a} \theta \right] \left( B \cos \left[ \frac{\sin\theta}{a} \right] -A\sin \left[ \frac{\sin\theta}{a} \right] \right) {}\nonumber\\
&& {} + \sin \left[ \frac{\sqrt{1+a^2}}{a} \theta \right] \left( \cos \left[ \frac{\sin\theta}{a} \right] (A + a B \sin \theta) + ( B-a A \sin \theta ) \sin \left[ \frac{\sin\theta}{a} \right] \right) ,\nonumber\\
\end{eqnarray*}
Moreover, since
\begin{eqnarray}
\langle\alpha''(s),\vec{a}\rangle=z''(s) =-\nu' \frac{\cos\theta}{\sqrt{1+a^2}}+ \nu^2 \frac{a}{\sqrt{1+a^2}} \kappa,
\end{eqnarray}
we find that
\begin{eqnarray}
z(s) = \frac{1}{\sqrt{1+a^2}} \cos \left[ \frac{\sin\theta}{a} \right](-a A + B \sin \theta)- \sin \left[ \frac{\sin\theta}{a} \right] (a B + A \sin \theta).\nonumber
\end{eqnarray}
where $A^2+B^2=1$, $a$ is constant and $\theta=\theta(s)$.

As a result of the above findings, we can give the following theorem.

\begin{theorem}
Under the above notation, $\alpha$ is a spherical slant helix. Moreover, all spherical slant helix can be constructed by above method.
\end{theorem}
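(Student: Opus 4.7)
The plan is to verify both directions of the biconditional asserted in the theorem: first that the curve $\alpha(s)=(x(s),y(s),z(s))$ produced by the preceding construction is simultaneously a slant helix and contained in a sphere; second that every spherical slant helix arises in this way.

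For the forward direction, the slant-helix property is built into the construction through Theorem 3.1(2): the expressions for $\kappa$ and $\tau$ in terms of $\theta$ are used throughout the derivation of $\phi_\pi$ and of the coordinates. To establish the spherical property I would invoke Theorem 2.2(4). A short computation from Theorem 3.1(2) gives
\[
\int \nu\tau\, ds \;=\; \int \frac{\theta'\cos\theta}{a}\, ds \;=\; \frac{\sin\theta}{a},
\]
modulo an additive constant that can be absorbed into $A$ and $B$. Rewriting \eqref{ds} as $\frac{a\nu}{\theta'\sin\theta}=A\cos[\sin\theta/a]+B\sin[\sin\theta/a]$ and recognizing the left-hand side as $1/\kappa$ from Theorem 3.1(2), the construction is seen to enforce
\[
\frac{1}{\kappa} \;=\; A\cos\!\left[\frac{\sin\theta}{a}\right]+B\sin\!\left[\frac{\sin\theta}{a}\right],
\]
which is precisely condition (4) of Theorem 2.2 with $r^2=A^2+B^2=1$. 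Hence $\alpha$ lies on the unit sphere.

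For the converse direction, start with an arbitrary spherical slant helix, which after scaling we may assume lies on the unit sphere. Theorem 3.1 provides $\theta(s)$ and the constant $a$ satisfying the characterizing formulas, and Theorem 2.2(4) provides constants $A,B$ with $A^2+B^2=1$ such that $1/\kappa = A\cos(\int\nu\tau\,ds)+B\sin(\int\nu\tau\,ds)$. As above, the integrand simplifies to $\sin\theta/a$, and after absorbing an additive constant this matches the expression used in the construction, yielding \eqref{ds}. By Theorem 3.1(3), I may rotate so that the axis $\vec{a}$ of the slant helix lies along $Oz$, which is a rigid motion. The projection \eqref{11} onto the $xy$-plane is then a plane curve whose intrinsic equation is \eqref{25}; integrating once gives $\phi_\pi$ as in \eqref{fi}, and the reconstruction formulas \eqref{xyz}, \eqref{xyzt} combined with the change of variable \eqref{ds} reproduce exactly the displayed $x(s), y(s)$. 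The $z$-component is then recovered from $z'(s)=\langle\alpha',\vec{a}\rangle = -\nu\cos\theta/\sqrt{1+a^2}$, integrated by the same change of variable.

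The main technical obstacle is the closed-form evaluation of the integrals defining $x(s)$, $y(s)$, and $z(s)$: the integrand $\cos\phi_\pi\, ds_\pi$ only admits a closed form because $\phi_\pi$ splits as the two simple functions of $\theta$ visible in \eqref{fi}, and because the weight $ds$ has the specific multiplicative structure from \eqref{ds}, so that standard angle-sum identities collapse the products. The remainder of the proof is a direct assembly of the earlier lemmas; the crucial linking step in both directions is the identification of \eqref{ds} as the spherical condition of Theorem 2.2(4) expressed in the slant-helix parametrization of Theorem 3.1(2).
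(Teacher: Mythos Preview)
Your approach is correct but follows a genuinely different route from the paper's. The paper proves the forward direction by brute force: it writes out $T$, $N$, $B$ explicitly from the parametrization, then computes $\kappa$, $\tau$ directly, plugs into the formula \eqref{1.2} to obtain $\sigma(s)=-a$ (constant), and finally checks $\|\alpha\|^2=A^2+B^2=1$. It does not separately argue the converse; that is left implicit in the derivation preceding the theorem.

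You instead argue structurally: the construction was obtained by composing the slant-helix characterization (Theorem 3.1(2)) with the spherical characterization (Theorem 2.2(4)), so the output curve must satisfy both. The key identification you make --- that \eqref{ds} is exactly Theorem 2.2(4) rewritten via $\int\nu\tau\,ds=\sin\theta/a$ --- is the conceptual heart of the matter and is entirely correct. You also spell out the converse direction (rotate the axis to $Oz$, recover $\theta,a$ from Theorem 3.1, recover $A,B$ from Theorem 2.2(4), rebuild the coordinates), which the paper leaves tacit. One small point to tighten: your forward argument for the slant-helix property (``built into the construction'') implicitly assumes that the explicit curve really has the $\kappa,\tau$ from Theorem 3.1(2); strictly, the derivation runs in the direction ``slant helix $\Rightarrow$ these coordinates,'' so to close the loop you should either invoke the fundamental theorem of space curves (the prescribed $\kappa,\tau$ determine a unique curve up to rigid motion, and the derivation computes it) or verify $\kappa,\tau$ directly as the paper does. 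Your route is cleaner and explains \emph{why} the formulas work; the paper's direct computation has the compensating virtue of independently confirming that the closed-form integrals for $x,y,z$ were evaluated correctly.
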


\begin{proof}
Let $\alpha$ be a space curve with Frenet frame $\{T,N,B\}$, curvature $\kappa$ and torsion $\tau$.

In this case, we will show that $\sigma$ is constant and $\alpha \in S^2$.

By simple calculation, spherical indicatricies $T(s)$, $N(s)$, $B(s)$ of the curve $\alpha$, respectively, are
\begin{eqnarray*}
T(s)& =  &( \frac{a \cos \theta \cos \left[ \frac{\sqrt{1+a^2}}{a} \theta \right] }{ \sqrt{1+a^2}} + \sin \theta \sin \left[ \frac{\sqrt{1+a^2}}{a} \theta \right], \\
&& {} \frac{a \cos \theta \sin \left[ \frac{\sqrt{1+a^2}}{a} \theta \right] }{ \sqrt{1+a^2}} - \sin \theta \cos \left[ \frac{\sqrt{1+a^2}}{a} \theta \right], -\frac{\cos \theta}{\sqrt{1+a^2}}),\\
N(s)& =  &( \frac{ \cos \left[ \frac{\sqrt{1+a^2}}{a} \theta \right] }{ \sqrt{1+a^2}} , \frac{ \sin \left[ \frac{\sqrt{1+a^2}}{a} \theta \right] }{ \sqrt{1+a^2}} , \frac{a}{\sqrt{1+a^2}}),\\
B(s)& =  &( -\frac{a \sin \theta \cos \left[ \frac{\sqrt{1+a^2}}{a} \theta \right] }{ \sqrt{1+a^2}} + \cos \theta \sin \left[ \frac{\sqrt{1+a^2}}{a} \theta \right], \\
&& {} -\frac{a \sin \theta \sin \left[ \frac{\sqrt{1+a^2}}{a} \theta \right] }{ \sqrt{1+a^2}} - \cos \theta \cos \left[ \frac{\sqrt{1+a^2}}{a} \theta \right], \frac{\sin \theta}{\sqrt{1+a^2}}).
\end{eqnarray*}
And also, by the formulae of the curvature and the torsion for a general parameter, we can calculate that
\begin{eqnarray*}
\kappa(s) & = & \frac{1}{|A \cos \left[ \frac{\sin \theta}{a} \right]+B \sin \left[ \frac{\sin \theta}{a} \right]|}, \\
\tau(s) & = & \frac{\cot \theta}{A \cos \left[ \frac{\sin \theta}{a} \right]+B \sin \left[ \frac{\sin \theta}{a} \right]}.
\end{eqnarray*}

Therefore, by using eq. \eqref{1.2}
\begin{equation*}
\sigma(s)=-a=constant
\end{equation*}
which means that $\alpha$ is a slant helix.

Finally,
\begin{equation*}
\| \alpha \|=A^2+B^2=1.
\end{equation*}
Then, $\alpha$ is spherical slant helix. Thus the proof of theorem is completed.
\end{proof}

\begin{proposition}
If $\frac{\sqrt{1+a^2}}{a}$ is a rational number, then spherical slant helix $\alpha$ is closed curve. If $\frac{\sqrt{1+a^2}}{a}$ is not a rational number, then the curve spherical slant helix $\alpha$ never closes.
\end{proposition}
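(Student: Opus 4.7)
The plan is to exploit the explicit parametrization $x(s),y(s),z(s)$ derived above. I would first observe that every occurrence of $\theta$ in these formulas enters through one of two kinds of term: (i) expressions that are $2\pi$-periodic in $\theta$, namely $\cos\theta$, $\sin\theta$, and the nested functions $\cos[\sin\theta/a]$, $\sin[\sin\theta/a]$; or (ii) the pure phase terms $\cos[\tfrac{\sqrt{1+a^2}}{a}\theta]$ and $\sin[\tfrac{\sqrt{1+a^2}}{a}\theta]$, whose angular frequency is $\tfrac{\sqrt{1+a^2}}{a}$. Closure of $\alpha$ is then reduced to a question of commensurability between the two frequencies $1$ and $\tfrac{\sqrt{1+a^2}}{a}$.

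For the sufficient direction, suppose $\tfrac{\sqrt{1+a^2}}{a}=p/q$ with positive integers $p,q$. I would substitute $\theta\mapsto\theta+2\pi q$ in the formulas for $x,y,z$: the type-(i) terms are unchanged by construction, while the type-(ii) arguments advance by $\tfrac{\sqrt{1+a^2}}{a}\cdot 2\pi q=2\pi p$, so those terms are unchanged as well. Hence $\theta\mapsto\alpha$ is $2\pi q$-periodic, and since $\theta(s)$ is strictly monotone in $s$ by equation \eqref{ds}, the curve $\alpha$ closes.

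For the necessary direction, assume $\alpha$ is closed, so $\alpha$ is periodic in its arc-length parameter with some period $P>0$. Every Frenet object is then $P$-periodic; I would apply this in turn to $N(s)$ and $T(s)$. The third component of $N(s)$ is the constant $a/\sqrt{1+a^2}$, while its first two components form the vector $\tfrac{1}{\sqrt{1+a^2}}(\cos[\tfrac{\sqrt{1+a^2}}{a}\theta],\sin[\tfrac{\sqrt{1+a^2}}{a}\theta])$, so $P$-periodicity of $N$ forces $\tfrac{\sqrt{1+a^2}}{a}\bigl(\theta(s+P)-\theta(s)\bigr)\in 2\pi\mathbb{Z}$. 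Next, periodicity of the third component of $T(s)$ gives $\cos\theta(s+P)=\cos\theta(s)$, and using the first two components of $T$ (which additionally contain $\sin\theta$) to remove the reflection ambiguity yields $\theta(s+P)-\theta(s)=2\pi n$ for a positive integer $n$. Combining these relations, $2\pi n\cdot \tfrac{\sqrt{1+a^2}}{a}=2\pi k$, so $\tfrac{\sqrt{1+a^2}}{a}=k/n\in\mathbb{Q}$.

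The hard part is the necessity direction: one has to resolve the sign ambiguity in $\cos\theta(s+P)=\cos\theta(s)$ so as to force an honest $2\pi n$ advance in $\theta$ rather than a reflection, and one must exclude the degenerate case $n=0$. Strict monotonicity of $\theta(s)$ (visible from \eqref{ds}) handles $n\neq 0$ and, together with the additional $\sin\theta$ information carried by $T$, settles the sign issue; the sufficient direction is essentially the one-line substitution above.
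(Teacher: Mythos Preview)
The paper states this proposition immediately after the proof of Theorem~4.1 and supplies \emph{no} proof for it at all, so there is nothing of the authors' to compare against; your proposal has to be judged on its own merits.

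Your sufficiency argument is correct and is exactly the one-line substitution you describe: every coordinate of $\alpha$ is built from terms that are $2\pi$-periodic in $\theta$ together with the pure phase $\tfrac{\sqrt{1+a^2}}{a}\,\theta$, so rationality $\tfrac{\sqrt{1+a^2}}{a}=p/q$ yields the common period $2\pi q$. For necessity, reading off $\tfrac{\sqrt{1+a^2}}{a}\,\theta\bmod 2\pi$ from the components of $N$ and then $\theta\bmod 2\pi$ from $T$ (using the $\sin\theta$ terms in $T_1,T_2$ to kill the reflection ambiguity, as you indicate) is the natural mechanism, and it works.

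One point to tighten: your appeal to \eqref{ds} for strict monotonicity of $\theta(s)$ is not quite what that equation says. Equation \eqref{ds} gives $ds/d\theta$ a factor of $\sin\theta$, so it vanishes at $\theta\in\pi\mathbb Z$; equivalently, Theorem~3.1 reads $\kappa=\tfrac{1}{a\nu}\theta'\sin\theta$, so keeping $\kappa>0$ forces $\theta'(s)\to\infty$ as $\sin\theta\to 0$. Thus the exclusion of the degenerate case $n=0$ should be argued a bit more carefully. A clean way around this is to bypass $s$ entirely: if $\tfrac{\sqrt{1+a^2}}{a}\notin\mathbb Q$, the map $\theta\mapsto\bigl(e^{i\theta},\,e^{i\frac{\sqrt{1+a^2}}{a}\theta}\bigr)$ is injective on $\mathbb R$, and since the pair $(T(\theta),N(\theta))$ determines this torus point, no two distinct values of $\theta$ can give the same Frenet frame, hence the curve never closes.
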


\begin{definition}
Let $\alpha$ be a spherical curve. Let us denote $T(s)=\nu(s) \alpha'(s)$, and we call $T$ a unit tangent vector of $\alpha$ at $s$. We now set a vector $Y(s)=\alpha(s) \wedge T(s)$. By definition, we have an orthonormal frame $\{\alpha(s),T(s),Y(s)\}$. This frame is called the Sabban frame of $\alpha$ \cite{koen}.
\end{definition}

\begin{theorem}
The indicatrix $Y$ of the spherical slant helix $\alpha$ is a spherical slant helix.
\end{theorem}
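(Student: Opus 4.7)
The plan is to identify the Frenet frame of $Y$ in terms of that of $\alpha$, and in particular to show that, up to a global sign, $N_{Y} = \pm N$. Once this is done, the slant--helix axis $\vec{U}$ of $\alpha$ supplied by Theorem~3.1(3) immediately serves as a slant--helix axis for $Y$, and since $Y\in S^{2}$ is automatic, the conclusion follows.

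First I will observe that $Y$ is spherical. From $\|\alpha\|\equiv 1$ one gets $\alpha \perp T$, so $\{\alpha,T,Y\}$ is a positively oriented orthonormal frame with $\|Y\|=1$. Because $N$ and $B$ are both orthogonal to $T$, I will decompose them in $\mathrm{span}\{\alpha,Y\}$ as
\begin{equation*}
N \;=\; c_{1}\,\alpha + c_{2}\,Y, \qquad B \;=\; c_{2}\,\alpha - c_{1}\,Y, \qquad c_{1}^{\,2}+c_{2}^{\,2}=1,
\end{equation*}
and then use the Serret--Frenet formulas to compute
\begin{equation*}
Y' \;=\; \alpha' \wedge T + \alpha \wedge T' \;=\; \nu\kappa\,(\alpha \wedge N) \;=\; -\nu\kappa\,c_{2}\, T.
\end{equation*}
Hence $T_{Y} = \epsilon\,T$ with $\epsilon := -\mathrm{sgn}(c_{2})$. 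A further differentiation of $Y'=\eta T$ with $\eta=-\nu\kappa c_{2}$ yields $Y''=\eta' T + \eta\nu\kappa\,N$, so $Y'\wedge Y'' = \eta^{2}\nu\kappa\,(T\wedge N) = \eta^{2}\nu\kappa\,B$, a nonnegative scalar times $B$. Thus $B_{Y}=B$, and therefore $N_{Y} = B_{Y}\wedge T_{Y} = \epsilon\,N$.

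To close the argument I will invoke Theorem~3.1(3): since $\alpha$ is a slant helix, there exists a constant vector $\vec{U}$ with $\langle N,\vec{U}\rangle \equiv 1$. Consequently $\langle N_{Y},\,\epsilon\vec{U}\rangle = \epsilon^{2}\langle N,\vec{U}\rangle \equiv 1$ is also constant, so the principal normal $N_{Y}$ makes a constant angle with the fixed direction $\epsilon\vec{U}$. By the slant--helix definition this shows $Y$ is a slant helix, and combined with $Y\in S^{2}$ it is a spherical slant helix.

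The one point that requires care is that $\epsilon$ must be a genuine global constant and not merely piecewise constant. Differentiating $\|\alpha\|^{2}\equiv 1$ twice gives $c_{1}=\langle N,\alpha\rangle=-1/\kappa$, so $c_{2}^{\,2}=1-1/\kappa^{2}$, and $c_{2}$ can switch sign only on the locus $\{\kappa=1\}$; at such a point $Y'=0$ and $Y$ fails to be regular. Under the implicit hypothesis that $Y$ is a bona fide curve (its Frenet frame is invoked in the statement), we stay away from this degenerate locus, $\mathrm{sgn}(c_{2})$ is constant throughout, and the proof closes. Aside from this sign bookkeeping, the argument is pure Frenet--frame algebra in the Sabban basis.
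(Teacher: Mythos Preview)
Your argument is correct and takes a genuinely different route from the paper. The paper proceeds computationally: it writes out $Y'$, $Y''$, $Y'''$ in terms of $\alpha\wedge T$, $\alpha\wedge N$, $\alpha\wedge B$ and the Frenet frame, extracts the explicit formulas $\kappa^{Y}=\kappa/\sqrt{\kappa^{2}-1}$ and $\tau^{Y}=\tau/\sqrt{\kappa^{2}-1}$, and then checks directly from equation~(2.2) that $\sigma^{Y}=a$ is constant. Your approach is more geometric: by expanding $N$ and $B$ in the Sabban basis $\{\alpha,Y\}$ you obtain $T_{Y}=\epsilon T$, $B_{Y}=B$, and hence $N_{Y}=\epsilon N$, after which the slant-helix property of $Y$ is inherited immediately from that of $\alpha$ via Definition~2.1 (or Theorem~3.1(3)). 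Your proof is shorter and explains \emph{why} the result holds---$Y$ literally shares the principal-normal line of $\alpha$---while the paper's computation buys you the explicit values of $\kappa^{Y}$, $\tau^{Y}$, and $\sigma^{Y}$, which your argument does not produce. Your handling of the sign $\epsilon$ and the regularity caveat $\kappa\neq 1$ (equivalently $c_{2}\neq 0$) is the right level of care; the paper leaves this implicit in writing $\sqrt{\kappa^{2}-1}$.
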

\begin{proof}
Let $Y$ be curve with Frenet frame $\{T^Y,N^Y,B^Y\}$, curvature $\kappa^Y$, torsion $\tau^Y$ and the geodesic curvature of the spherical image of the principal normal indicatrix $(N^Y)$ of $Y$ be $\sigma^Y$. Then we have
\begin{eqnarray*}
Y'(s) & = &  \nu \kappa \alpha \wedge N,\\
Y''(s) & = & (\nu \kappa)'  \alpha \wedge N + \nu^2 \kappa B -  \nu^2 \kappa^2 \alpha \wedge T +  \nu^2 \kappa \tau \alpha \wedge B,\\
Y''(s) & = & (\nu \kappa)'' \alpha \wedge N + (\nu \kappa)' \nu B + \nu \tau (\nu \kappa)' \alpha \wedge B - \nu \kappa (\nu \kappa)' \alpha \wedge T \\
&+& (\nu^2 \kappa)' B - \nu^3 \kappa \tau N - (\nu^2 \kappa^2)' \alpha \wedge T - \nu^3 \kappa^3 \alpha \wedge N\\
&+& (\nu^2 \kappa \tau)' \alpha \wedge B - \nu^3 \kappa \tau N - \nu^3 \kappa \tau^2 \alpha \wedge N.
\end{eqnarray*}
By the formulae the curvature and the torsion for a general parameter, we can calculate that
\begin{eqnarray*}
\kappa^Y & = & \frac{\kappa}{\sqrt{\kappa^2-1}}, \\
\tau^Y & = & \frac{\tau}{\sqrt{\kappa^2-1}}.
\end{eqnarray*}
Moreover, $\sigma^Y(s)=a$, so that $Y(s)$ is a spherical slant helix.
\end{proof}

\begin{remark}
There are not spherical Salkowski and anti-Salkowski curve.
\end{remark}

\begin{proof}
We choose that $\alpha$ is unit speed Salkowski curve without loss of generality. Since $\kappa(s)=1$ and by using
\begin{equation*}
\left(\frac{1}{\tau} \left(\frac{1}{\kappa}\right)' \right)' + \frac{\tau}{\kappa}=0,
\end{equation*}
we have $\tau(s)=0$. Therefore, $\alpha$ is plane curve and is not spherical slant helix.

Similarly, we choose that $\alpha$ is unit speed Anti-Salkowski curve without loss of generality. Then, by simple calculation, we have $\sigma(s)$ is not constant which means that $\alpha$ is not spherical slant curve.
\end{proof}

\section{Example}
In this section we give two example of spherical slant helices in Euclidean $3$%
-space and draw its pictures and its tangent indicatrix, normal indicatrix, and binormal
indicatrix by using \textbf{Mathematica}.

\begin{example}
For $a=1$, $A=1$, $B=0$, We consider a spherical slant helix $\alpha $ is defined by

\begin{eqnarray*}
x(s) & = & -\cos \theta \sin(\sqrt{2} \theta ) \sin(\sin \theta) {} \nonumber\\
&& {} +\frac{1}{\sqrt{2}} \cos(\sqrt{2}\theta)  (-\cos(\sin \theta)) + \sin \theta \sin(\sin \theta) ) \nonumber\\
y(s) & = & \cos \theta \cos(\sqrt{2}\theta) \sin(\sin \theta) {} \nonumber\\
&& {} +\frac{1}{\sqrt{2}} \sin(\sqrt{2}\theta)  (-\cos(\sin \theta)) + \sin \theta \sin(\sin \theta) ) \nonumber\\
z(s) &=& \frac{1}{\sqrt{2}} (-\cos(\sin \theta) + \sin \theta \sin(\sin \theta)). \nonumber
\end{eqnarray*}

The picture of the curve $\alpha $ is rendered in Figure 1.

The parametrization of the tangent indicatrix $T =\left( T_{1},T_{2},T_{3}\right) $ of the spherical slant helix $\alpha $ is

\begin{eqnarray*}
T_1(s) & = & \frac{1}{\sqrt{2}} \cos \theta \cos (\sqrt{2} \theta) + \sin(\sqrt{2} \theta) \sin \theta, \nonumber\\
T_2(s) & = & \frac{1}{\sqrt{2}} \sin \theta \cos (\sqrt{2} \theta) - \sin(\sqrt{2} \theta) \cos \theta, \nonumber\\
T_3(s) &=& -\frac{1}{\sqrt{2}} \cos \theta. \nonumber
\end{eqnarray*}

The picture of the tangent indicatrix is rendered in Figure 2 (a).

The parametrization of the normal indicatrix $N =\left( N_{1},N_{2},N_{3}\right) $ of the spherical slant helix $\alpha $
is

\begin{eqnarray*}
N_1(s) & = & \frac{1}{\sqrt{2}}  \cos (\sqrt{2} \theta), \nonumber\\
N_2(s) & = & \frac{1}{\sqrt{2}}  \sin (\sqrt{2} \theta) , \nonumber\\
N_3(s) &=& \frac{1}{\sqrt{2}} . \nonumber
\end{eqnarray*}

The picture of the normal indicatrix is rendered in Figure 2 (b).

The parametrization of the binormal indicatrix $B =\left( B_{1},B_{2},B_{3}\right) $ of the spherical slant helix $\alpha $
is

\begin{eqnarray*}
B_1(s) & = & -\frac{1}{\sqrt{2}} \sin \theta \cos (\sqrt{2} \theta) + \sin(\sqrt{2} \theta) \cos \theta, \nonumber\\
B_2(s) & = & -\frac{1}{\sqrt{2}} \sin \theta \sin (\sqrt{2} \theta) - \cos(\sqrt{2} \theta) \cos \theta, \nonumber\\
B_3(s) &=& \frac{1}{\sqrt{2}} \sin \theta. \nonumber
\end{eqnarray*}

The picture of the binormal indicatrix is rendered in Figure 2 (c).
\end{example}

\begin{example}
For $A=1$, $B=0$ and $a=2$, $a=3$, $a=4$, the picture of the spherical slant helix $\alpha $, respectively, is rendered in Figure 4.
\end{example}

\begin{example}
For $a=1$, $A=1$, $B=0$, We consider a spherical slant helix $Y$ is defined by

\begin{eqnarray*}
x(s) & = & -\cos \theta \sin(\sqrt{2} \theta) \cos(\sin \theta) {} \nonumber\\
&& {} +\frac{1}{\sqrt{2}} \cos(\sqrt{2}\theta)  (\cos(\sin \theta)) \sin \theta + \sin(\sin \theta) ) \nonumber\\
y(s) & = & \cos \theta \cos(\sqrt{2}\theta) \cos(\sin \theta) {} \nonumber\\
&& {} +\frac{1}{\sqrt{2}} \sin(\sqrt{2}\theta)  (\cos(\sin \theta)) \sin \theta + \sin(\sin \theta) ) \nonumber\\
z(s) &=& \frac{1}{\sqrt{2}} (-\cos(\sin \theta) \sin \theta +  \sin(\sin \theta)). \nonumber
\end{eqnarray*}

The picture of the curve $\alpha $ is rendered in Figure 4.

The parametrization of the tangent indicatrix $T^Y =\left( T^Y_1,T^Y_2,T^Y_3\right) $ of the spherical slant helix $Y$ is

\begin{eqnarray*}
T^Y_1(s) & = & -\frac{1}{\sqrt{2}} \cos \theta \cos (\sqrt{2} \theta) - \sin(\sqrt{2} \theta) \sin \theta, \nonumber\\
T^Y_2(s) & = & -\frac{1}{\sqrt{2}} \cos \theta \sin (\sqrt{2} \theta) + \cos(\sqrt{2} \theta) \sin \theta, \nonumber\\
T^Y_3(s) &=& \frac{1}{\sqrt{2}} \cos \theta. \nonumber
\end{eqnarray*}

The picture of the tangent indicatrix is rendered in Figure 5 (a).

The parametrization of the normal indicatrix $N^Y =\left( N^Y_1,N^Y_2,N^Y_3\right) $ of the spherical slant helix $Y$
is

\begin{eqnarray*}
N^Y_1(s) & = & -\frac{1}{\sqrt{2}}  \cos (\sqrt{2} \theta), \nonumber\\
N^Y_2(s) & = & -\frac{1}{\sqrt{2}}  \sin (\sqrt{2} \theta) , \nonumber\\
N^Y_3(s) &=& -\frac{1}{\sqrt{2}}.  \nonumber
\end{eqnarray*}

The picture of the normal indicatrix is rendered in Figure 5 (b).

The parametrization of the binormal indicatrix $B^Y=\left( B^Y_1,B^Y_2,B^Y_3\right) $ of the spherical slant helix $Y$ is

\begin{eqnarray*}
B^Y_1(s) & = & -\frac{1}{\sqrt{2}} \sin \theta \cos (\sqrt{2} \theta) + \sin(\sqrt{2} \theta) \cos \theta, \nonumber\\
B^Y_2(s) & = & -\frac{1}{\sqrt{2}} \sin \theta \sin (\sqrt{2} \theta) - \cos(\sqrt{2} \theta) \cos \theta, \nonumber\\
B^Y_3(s) &=& \frac{1}{\sqrt{2}} \sin \theta. \nonumber
\end{eqnarray*}

The picture of the binormal indicatrix is rendered in Figure 5 (c).
\end{example}

\begin{figure}[b]
\begin{center}
\includegraphics[width=2.in]{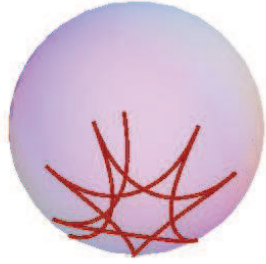}
\caption{For $a=1$, $A=1$, $B=0$, spherical slant helix $\alpha$.}
\label{1}
\end{center}
\end{figure}

\begin{figure}[b]
\centering
\subfigure[]{
   \includegraphics[scale =0.32] {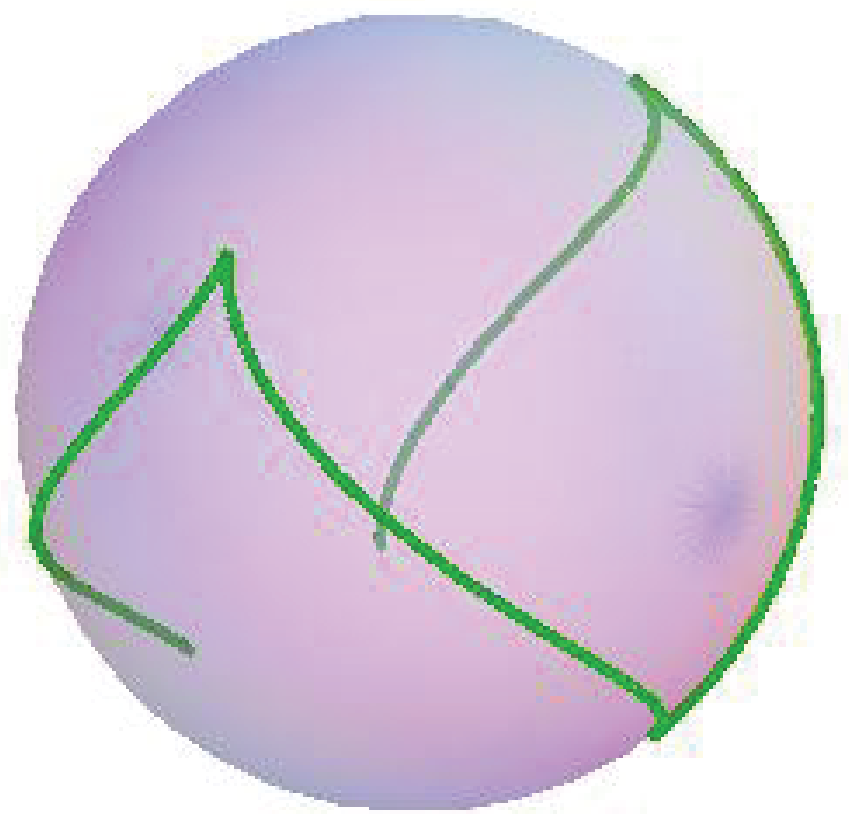}
 }
 \subfigure[]{
   \includegraphics[scale =0.32] {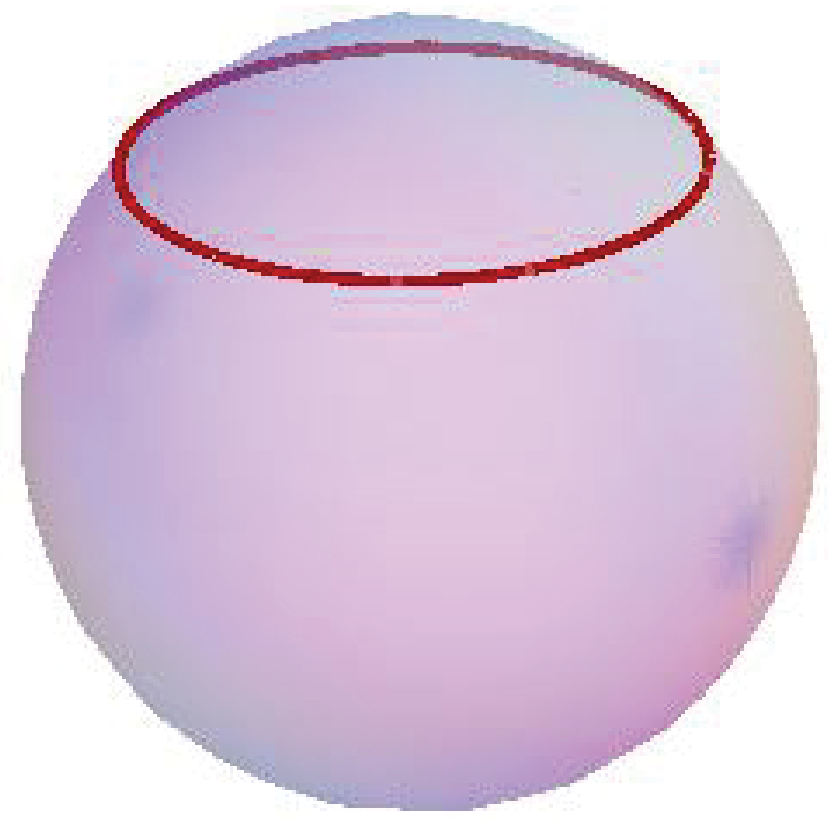}
 }
 \subfigure[]{
   \includegraphics[scale =0.32] {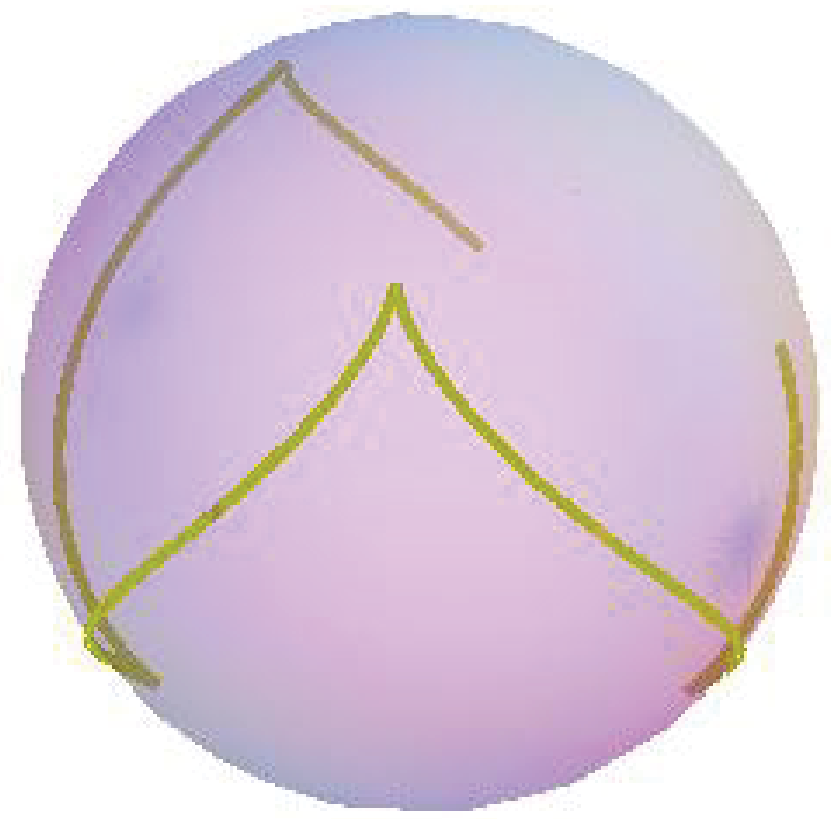}
 }
\label{cases1.5}
\caption{For $a=1$, $A=1$, $B=0$, tangent indicatrix of the spherical slant helix $\alpha$ (a), normal indicatrix of the spherical slant helix $\alpha$ (b) and binormal indicatrix of the spherical slant helix $\alpha$ (c).}
\end{figure}

\begin{figure}[h]
\centering
\subfigure[]{
   \includegraphics[scale =0.31] {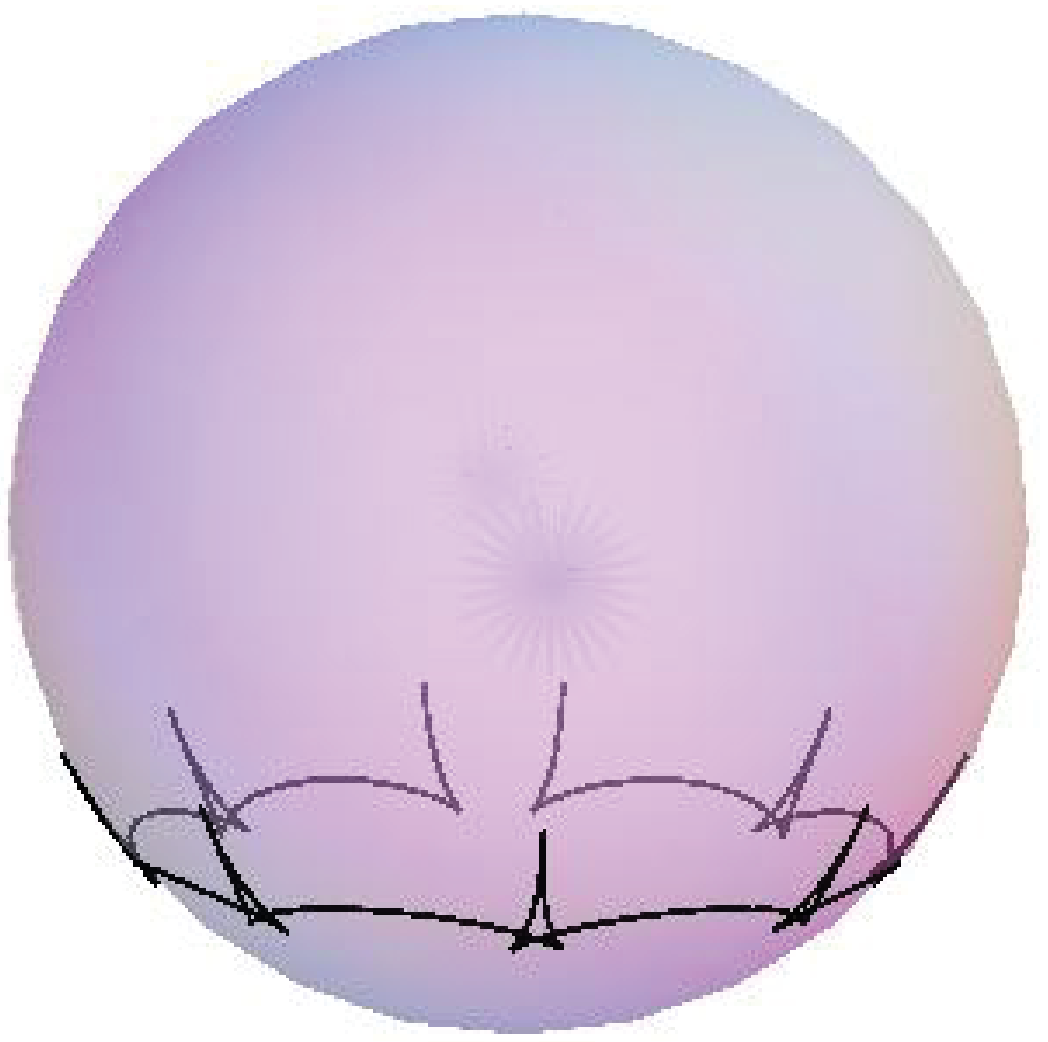}
 }
 \subfigure[]{
   \includegraphics[scale =0.31] {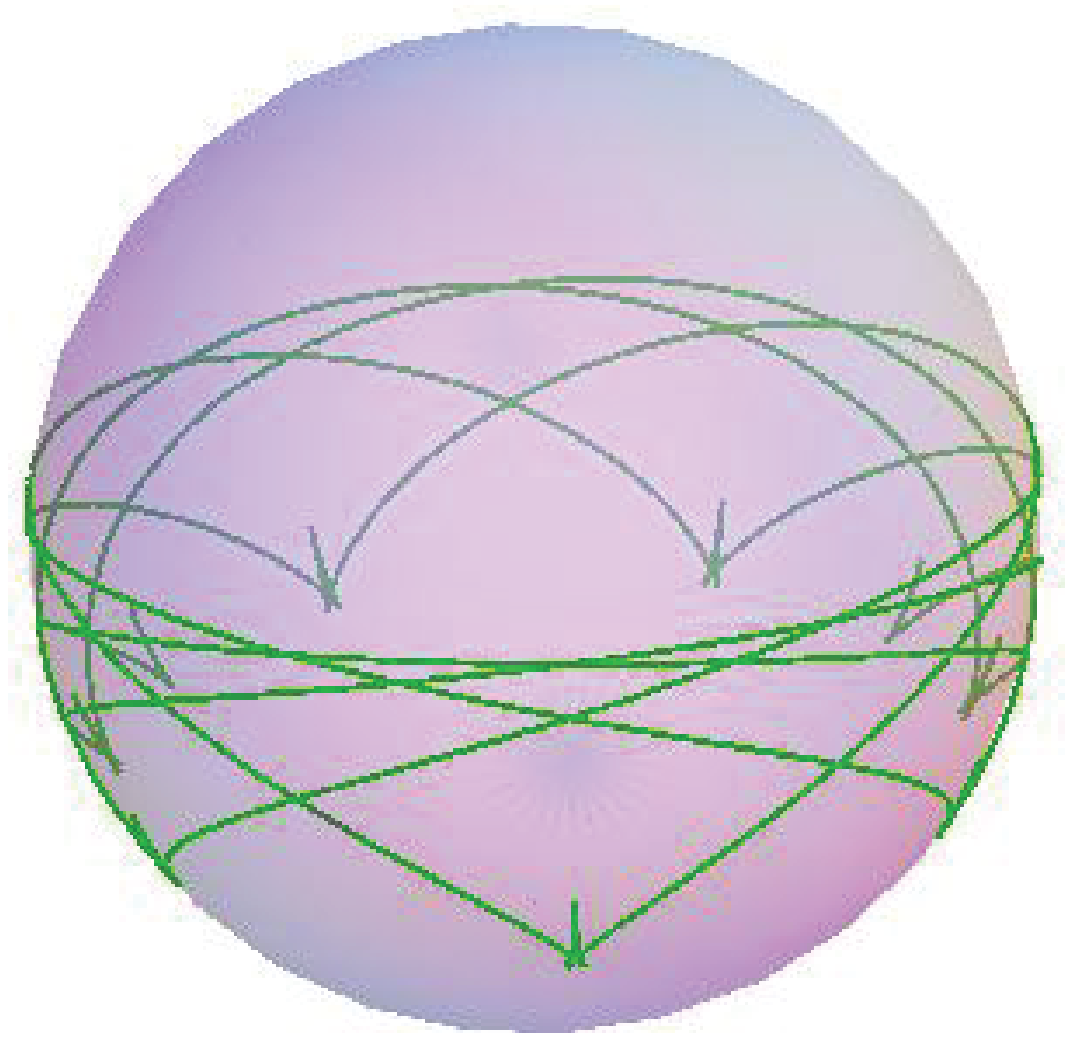}
 }
 \subfigure[]{
   \includegraphics[scale =0.31] {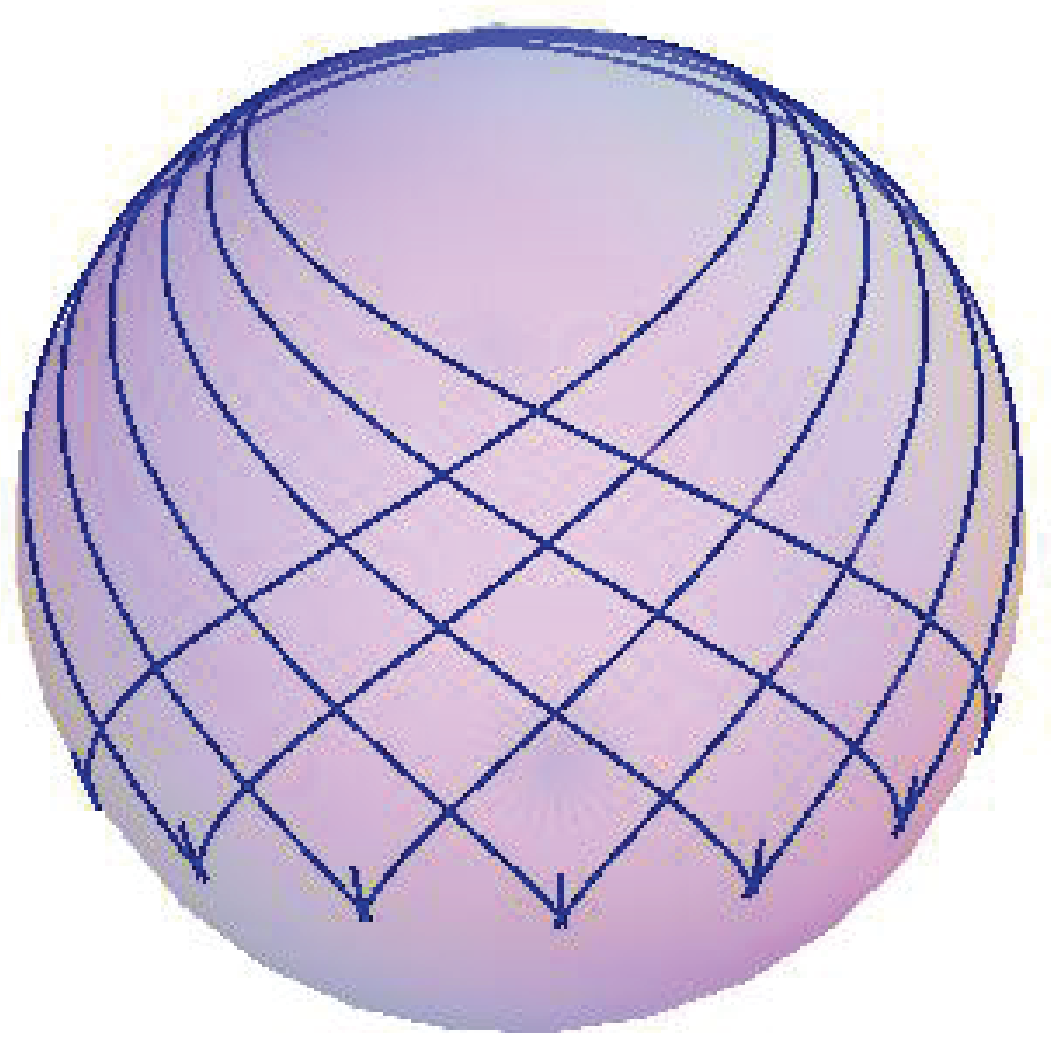}
 }
\label{cases1.5}
\caption{For $A=1$, $B=0$ and $a=2$, the spherical slant helix $\alpha$ (a), For $A=1$, $B=0$ and $a=3$, the spherical slant helix $\alpha$ (b) and For $A=1$, $B=0$ and $a=4$, the spherical slant helix $\alpha$ (c).}
\end{figure}

\begin{figure}[h]
\begin{center}
\includegraphics[width=2.3in]{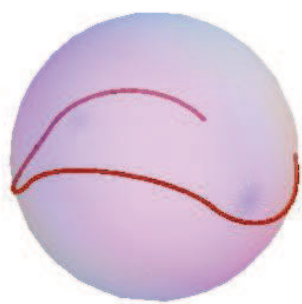}
\caption{For $a=1$, $A=1$, $B=0$, spherical slant helix $Y$.}
\label{1}
\end{center}
\end{figure}

\begin{figure}[ht]
\centering
\subfigure[]{
   \includegraphics[scale =0.35] {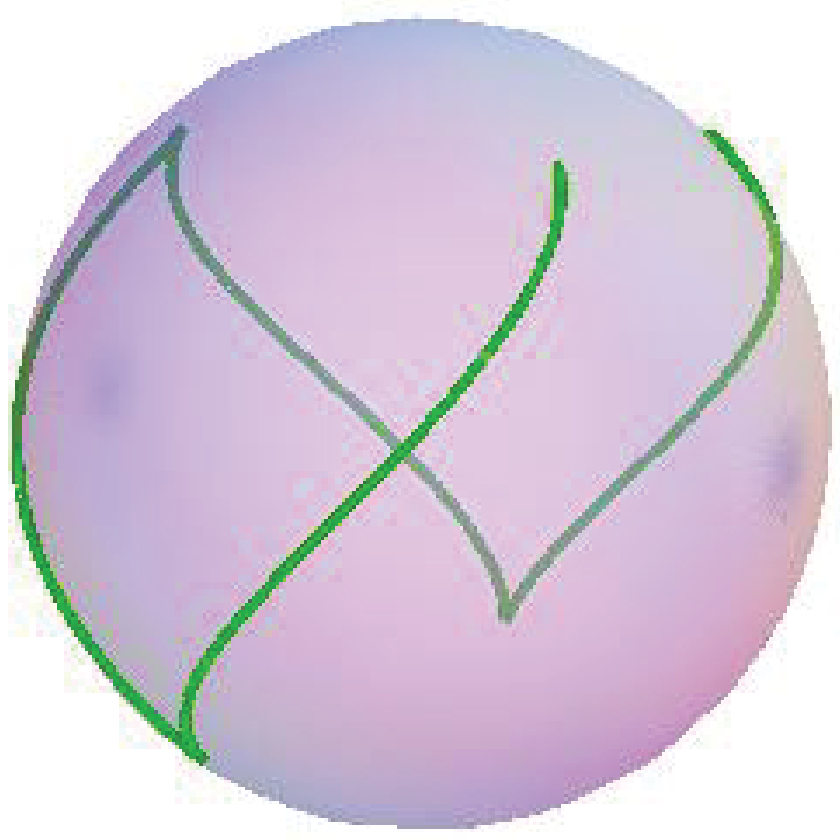}
 }
 \subfigure[]{
   \includegraphics[scale =0.35] {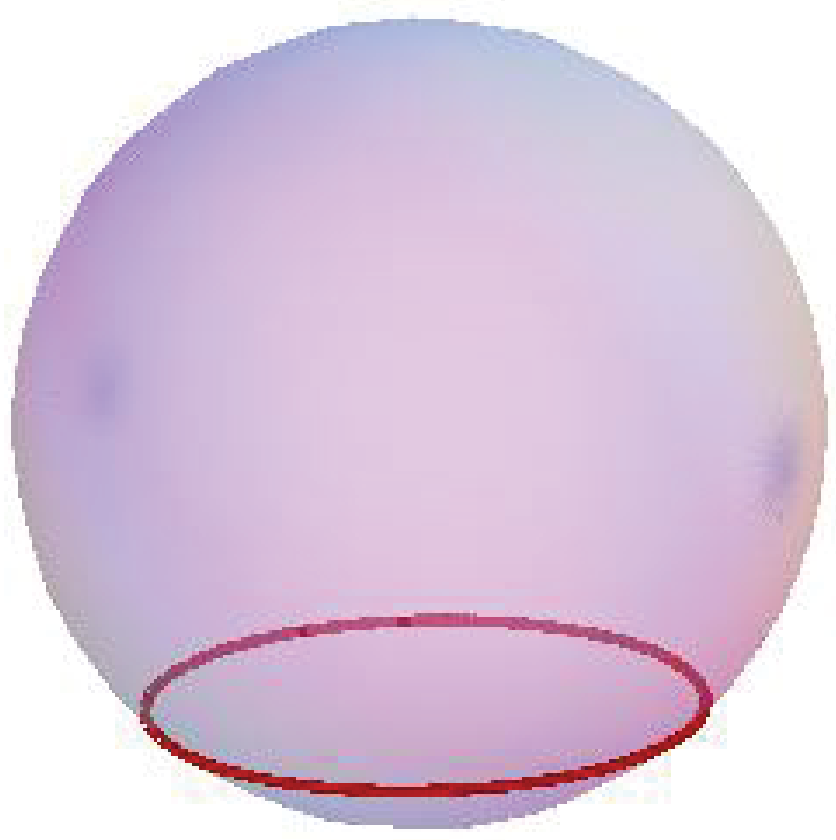}
 }
 \subfigure[]{
   \includegraphics[scale =0.35] {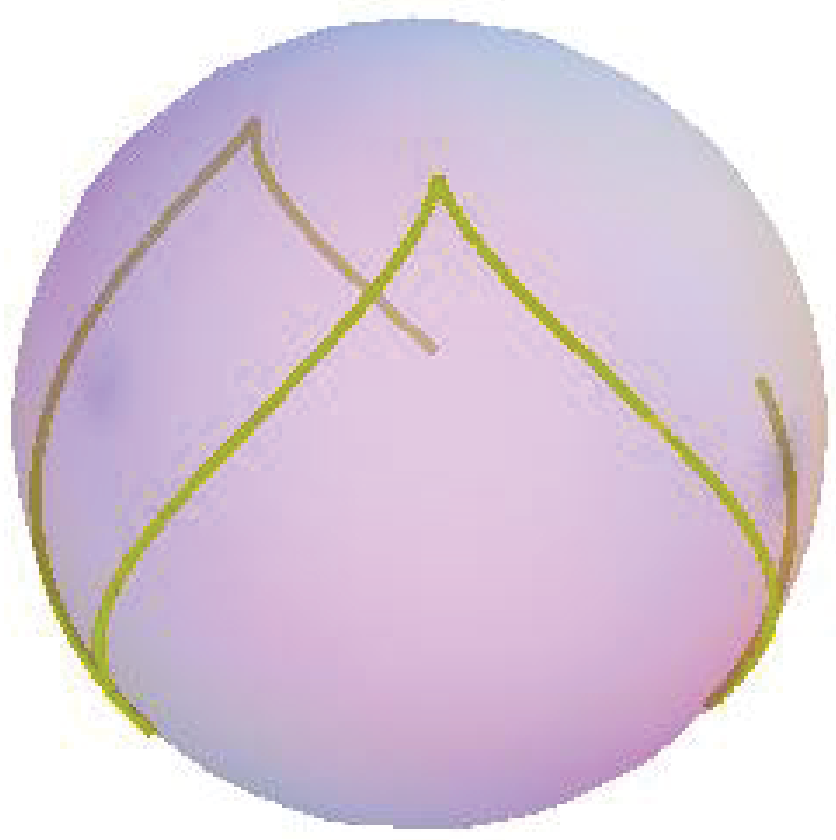}
 }
\label{cases1.5}
\caption{For $a=1$, $A=1$, $B=0$, tangent indicatrix of the spherical slant helix $\alpha$ (a), normal indicatrix of the spherical slant helix $\alpha$ (b) and binormal indicatrix of the spherical slant helix $\alpha$ (c).}
\end{figure}

\end{document}